%
%
%
%
%
\RequirePackage{fix-cm}
\documentclass[smallextended]{svjour3}       
\smartqed  
\usepackage{graphicx}
%
%
%
%
%
\usepackage{latexsym,amsmath,enumerate,amssymb,amsbsy}  
\usepackage{url}                                                                          
\begin{document}

\title{$2$- and $3$-modular Lattice Wiretap Codes in Small Dimensions
}


\author{Fuchun~Lin        \and
        Fr\'ed\'erique~Oggier \and
        Patrick~Sol\'e
}


\institute{Fuchun Lin and Fr\'ed\'erique Oggier \at
              Division of Mathematical Sciences,
  School of Physical and Mathematical Sciences, Nanyang Technological
  University, 21 Nanyang Link, Singapore 637371 \\
              \email{ linf0007@e.ntu.edu.sg and frederique@ntu.edu.sg}           
           \and
           Patrick Sol\'e \at
              Telecom ParisTech, CNRS, UMR 5141, Dept Comelec, 46 rue Barrault 75634 Paris cedex 13, France and Mathematics
Department, King AbdulAziz Unversity, Jeddah, Saudi Arabia.\\ 
               \email{ patrick.sole@telecom-paristech.fr}
}

\date{Received: date / Accepted: date}

\maketitle

\begin{abstract}
A recent line of work on lattice codes for Gaussian wiretap channels introduced a new lattice invariant called secrecy gain as a code design criterion which captures the confusion that lattice coding produces at an eavesdropper. Following up the study of unimodular lattice wiretap codes \cite{preprint}, this paper investigates $2$- and $3$-modular lattices and compares them with unimodular lattices. Most even $2$- and $3$-modular lattices are found to have better performance (that is, a higher secrecy gain) than the best unimodular lattices in dimension $n,\ 2\leq n\leq 23$. Odd $2$-modular lattices are considered, too, and three lattices are found to outperform the best unimodular lattices.
\keywords{Wiretap codes \and Gaussian channel \and Lattice codes \and Secrecy gain \and  Modular lattices\and Theta series }
\end{abstract}

\section{Introduction}
\label{sec:introduction}
In his seminal work, Wyner introduced the wiretap channel \cite{Wyner}, a discrete memoryless channel where the sender Alice
transmits confidential messages to a legitimate receiver Bob, in the presence of an eavesdropper Eve, who has only partial access to what Bob sees.
Both reliable and confidential communication between Alice and Bob is shown to be achievable at the same time,
by exploiting the physical difference between the channel to Bob and that to Eve, without the use of cryptographic means. Since then, many results of information theoretical nature have been found for various classes of wiretap channels ranging from Gaussian point-to-point channels to relay networks (see e.g. \cite{theoretic security} for a survey) capturing the trade-off between reliability and secrecy and aiming at determining the highest information rate that can be achieved with perfect secrecy, the so-called \textit{secrecy capacity}. Coding results focusing on constructing concrete codes that can be implemented in a specific channel are much fewer (see \cite{OW-84,TDCMM-07} for wiretap codes dealing with channels with erasures, \cite{Polar} for Polar wiretap codes and \cite{wiretap Rayleigh fading} for wiretap Rayleigh fading channels.

In this paper, we will focus on Gaussian wiretap channels, whose secrecy capacity was established in \cite{IEEE Gaussian wiretap channel}. Examples of existing Gaussian wiretap codes were designed for binary inputs, as in \cite{LDPC Gaussian wiretap codes,nested codes}. A different approach was adopted in \cite{ISITA}, where lattice codes were proposed, using as design
criterion a new lattice invariant called \textit{secrecy gain}, defined as the maximum of its \textit{secrecy function} (Section II), which was shown to characterize the confusion at the eavesdropper. A recent study on a new design criterion called \textit{flatness factor} confirms that to confuse Eve, the secrecy gain should be maximized \cite{flatness factor}. This suggests the study of the secrecy gain of lattices as a way to understand how to design a good Gaussian lattice wiretap code. Belfiore and Sol\'{e} \cite{ITW} discovered a symmetry point, called \textit{weak secrecy gain}, in the secrecy function of \textit{unimodular} lattices (generalized to all \textit{$\ell$-modular} lattices \cite{Gaussian wiretap codes}) and conjectured that the weak secrecy gain is actually the secrecy gain. Anne-Maria Ernvall-Hyt\"{o}nen \cite{EH,EHKissingnumber} invented a method to prove or disprove the conjecture for unimodular lattices. Up to date, secrecy gains of a special class of unimodular lattices called \textit{extremal unimodular} lattices and all unimodular lattices in dimensions up to $23$ are computed  \cite{Gaussian wiretap codes,preprint}. The asymptotic behavior of the average weak secrecy gain as a function of the dimension $n$ was investigated and an achievable lower bound on the secrecy gain of even unimodular lattices was given \cite{Gaussian wiretap codes}. Numerical upper bounds on the secrecy gains of unimodular lattices in general and unimodular lattices constructed from self-dual binary codes were given to compared with the achievable lower bound \cite{ITW2012}. 

This paper studies the weak secrecy gain of $2$- and $3$-modular lattices. Preliminary work \cite{ISITversion} showed that most of the known even $2$- and $3$-modular lattices in dimensions up to $24$ have secrecy gains bigger than the best unimodular lattices. After recalling how to compute the weak secrecy gain of even $2$- and $3$-modular lattices using the theory of modular forms, we extend our study to a class of odd $2$-modular lattices constructed from self-dual codes. We propose two methods to compute their weak secrecy gains and find three of these lattices have secrecy gains bigger than the best unimodular lattices. We then conclude that, at least in dimensions up to $23$, $2$- and $3$-modular lattices are a better option than unimodular lattices.

The remainder of this paper is organized as follows. In Section \ref{sec:preliminaries}, we first give a brief introduction to modular lattices and their \textit{theta series} as well as recall the definition of the secrecy gain and the previous results concerning this lattice invariant. The main results are given in Section \ref{sec:results}. Two approaches to compute the theta series of modular lattices are given, one making use of the modular form theory while the other utilizing the connection between the theta series and the weight enumerator of self-dual codes. Weak secrecy gains of several $2$- and $3$-modular lattices computed are then compared with the best unimodular lattices in Section \ref{sec:comparision}. In Section \ref{sec:conclu}, we summarize our results and give some future works.

\section{Preliminaries and previous results}\label{sec:preliminaries}
Consider a Gaussian wiretap channel, which is modeled as follows: Alice wants to send data to Bob over a Gaussian channel whose noise variance is given by $\sigma_b^2$. Eve is the eavesdropper trying to intercept data through another Gaussian channel with noise variance $\sigma_e^2$, where $\sigma_b^2< \sigma_e^2$, in order to have a positive secrecy capacity \cite{IEEE Gaussian wiretap channel}. More precisely, the model is
\begin{equation}\label{channel model}
\begin{array}{cc}
\mathbf{y}&=\mathbf{x}+\mathbf{v_b}\\
\mathbf{z}&=\mathbf{x}+\mathbf{v_e}.\\
\end{array}
\end{equation}
$\mathbf{x}\in \mathbb{R}^n$ is the transmitted signal. $\mathbf{y}$ and $\mathbf{z}$ are the received signals at Bob's, respectively Eve's side. $\mathbf{v_b}$ and $\mathbf{v_e}$ denote the Gaussian noise vectors at Bob's, respectively Eve's side, each component of both vectors are with zero mean, and respective variance $\sigma_b^2$ and $\sigma_e^2$. In this paper, we choose $\mathbf{x}$ to be a codeword coming from a specially designed lattice of dimension $n$, namely, we consider lattice coding. Let us thus start by recalling some concepts concerning lattices, in particular, \textit{modular lattices}.

A \textit{lattice} $\Lambda$ is an additive subgroup of $\mathbb{R}^n$, which can be described in terms of its \textit{generator matrix} $M$ by 
$$
\Lambda=\{\mathbf{x}=\mathbf{u}M|\mathbf{u}\in \mathbb{Z}^m\}, 
$$  
where
$$
M=\left (
\begin{array}{cccc}
v_{11}&v_{12}&\cdots&v_{1n}\\
v_{21}&v_{22}&\cdots&v_{2n}\\
\cdots&&\cdots&\\
v_{m1}&v_{m2}&\cdots&v_{mn}\\
\end{array}
\right )
$$
and the row vectors $\mathbf{v}_i=(v_{i1},\cdots,v_{in}),\ i=1,\ 2,\ \cdots,\ m$ form a basis of the lattice $\Lambda$. The matrix
$$
G=MM^T,
$$
where $M^T$ denotes the transpose of $M$, is called the \textit{Gram matrix} of the lattice. It is easy to see that the $(i,j)$th entry of $G$ is the inner product of the $i$th and $j$th row vectors of $M$, denoted by
$$G_{(i,j)}=\mathbf{v}_i\cdot \mathbf{v}_j.$$ 
The \textit{determinant} $\mbox{det}(\Lambda)$ of a lattice $\Lambda$ is the determinant of the matrix $G$, which is independent of the choice of the matrix $M$. A \textit{fundamental region} for a lattice is a building block which when repeated many times fills the whole space with just one lattice point in each copy. There are many different ways of choosing a fundamental region for a lattice $\Lambda$, but the volume of the fundamental region is uniquely determined and called the \textit{volume} $\mbox{vol}(\Lambda)$ of $\Lambda$, which is exactly $\sqrt{\mbox{det}(\Lambda)}$. Let us see an example of a fundamental region of a lattice. A \textit{Voronoi cell} $\mathcal{V}_{\Lambda}(\mathbf{x})$ of a lattice point $\mathbf{x}$ in $\Lambda$ consists of the points in the space that are closer to $\mathbf{x}$ than to any other lattice points of $\Lambda$.

The \textit{dual} of a lattice $\Lambda$ of dimension $n$ is defined to be
$$
\Lambda^*=\{\mathbf{x}\in \mathbb{R}^n: \mathbf{x}\cdot \mathbf{\lambda}\in \mathbb{Z} , \mbox{ for all } \mathbf{\lambda}\in \Lambda\}.
$$
A lattice $\Lambda$ is called an \textit{integral lattice} if $\Lambda\subset \Lambda^*$. The norm of any lattice point in an integral lattice $\Lambda$ is always an integer. If the norm is even for any lattice point, then $\Lambda$ is called an \textit{even} lattice. Otherwise, it is called an \textit{odd} lattice. A lattice is said to be \textit{equivalent}, or geometrically similar to its dual, if it differs from its dual only by possibly a rotation, reflection and change of scale. An integral lattice that is equivalent to its dual is called a \textit{modular} lattice. Alternatively as it was first defined by H.-G. Quebbemann \cite{modular lattices},  an $n$-dimensional integral lattice $\Lambda$ is modular if there exists a similarity $\sigma$ of $\mathbb{R}^n$ such that $\sigma(\Lambda^{*})=\Lambda$. If $\sigma$ multiplies norms by $\ell$, $\Lambda$ is said to be $\ell$-\textit{modular}. The determinant of an $\ell$-modular lattice $\Lambda$ of dimension $n$ is given by 
\begin{equation}\label{determinant}
\mbox{det}(\Lambda)=\ell^{\frac{n}{2}}. 
\end{equation}
This is because, on the one hand, $\mbox{det}(\Lambda^{*})=\mbox{det}(\Lambda)^{-1}$ by definition and, on the other hand, $\ell^n\mbox{det}(\Lambda^{*})=\mbox{det}(\Lambda)$ since $\sigma(\Lambda^{*})=\Lambda$. When $\ell=1$, $\mbox{det}(\Lambda)=1$ and we recover the definition of unimodular lattice as an integral lattice whose determinant is $1$.

\begin{example}
\begin{equation}\label{Z+lZ}
C^{\ell}=\sum_{d|\ell}\sqrt{d}\mathbb{Z},\ \ell=1,2,3,5,6,7,11,14,15,23
\end{equation}
is an $\ell$-modular lattice \cite{N.J.A Sloane}. When $\ell$ is a prime number, $C^{\ell}=\mathbb{Z}\oplus\sqrt{\ell}\mathbb{Z}$ is a two-dimensional $\ell$-modular lattice with the similarity map $\sigma$ taking $(x,y)$ to $(\sqrt{\ell}y,\sqrt{\ell}x)$. 
\end{example}


We will use some terminology from classical error correction codes in this paper. Unfamiliar readers can refer to \cite{error correction codes}. We will also assume basic knowledge of algebraic number theory \cite{algebraic number theory}. There is a classical way of constructing $\ell$-modular lattices from self-dual codes called Construction A. Let $K=\mathbb{Q}(\sqrt{\mu})$ be a quadratic imaginary extension of the rational field $\mathbb{Q}$ constructed by adjoining to it the square root of a square free negative integer $\mu$. The ring of integers $\mathfrak{O}_K$ of $K$ is given by  
\begin{equation}\label{eq: ringofintegers}
\mathfrak{O}_K=\mathbb{Z}[\theta],\ \theta=\left\{
\begin{array}{ll}
\frac{1+\sqrt{\mu}}{2}, &\mu\equiv 1\  (\mbox{mod }4) \\
\sqrt{\mu}, &\mbox{otherwise.} \\
\end{array}
\right.
\end{equation}
Let $p$ be a prime number. Then the quotient ring $R=\mathfrak{O}_K/p\mathfrak{O}_K$ is given by
\begin{equation}\label{eq: quotientringR}
R=\left\{
\begin{array}{ll}
\mathbb{F}_p\times\mathbb{F}_p, &p \mbox{ is split in }K; \\
\mathbb{F}_p+u\mathbb{F}_p\mbox{ with }u^2=0, &p\mbox{ is ramified in }K; \\
\mathbb{F}_{p^2}, &p\mbox{ is inert in }K. \\
\end{array}
\right.
\end{equation}
Let $k$ be a positive integer. Let
$$\rho: \mathfrak{O}_K^k\rightarrow R^k$$ 
be the map of component wise reduction modulo $p\mathfrak{O}_K$. 
Then the pre-image $\rho^{-1}(C)$ of a self-dual code $C$ over $R$ of length $k$ with carefully chosen $\mu$ and $p$ and possibly a re-scaling can give rise to a real $\ell$-modular lattice of dimension $2k$ \cite{CBachoc,PSole}. 
Examples will be specified in the sequel.


\begin{definition}
The theta series of a lattice $\Lambda$ is defined by
$$\Theta_{\Lambda}(\tau)=\Sigma_{\mathbf{\lambda}\in \Lambda}q^{\mathbf{||\lambda}||^2},q=e^{\pi i \tau},\ \tau\in \mathcal{H},$$
where $\mathbf{||\lambda}||^2=\mathbf{\lambda}\cdot\mathbf{\lambda}$ is called the (squared) norm of $\mathbf{\lambda}$ and $\mathcal{H}=\{a+ib\in \mathbb{C}|b>0\}$ denotes the upper half plane.
\end{definition}

The theta series of an integral lattice has a neat representation. Since the norms are all integers, we can combine the terms with the same norm and write
\begin{equation}\label{equ:theta series}
\Theta_{\Lambda}(\tau)=\Sigma_{m=0}^{\infty}A_mq^m,
\end{equation}
where $A_m$ counts the number of lattice points with norm $m$. They are actually \textit{modular forms} \cite{modular forms}. 

We will also need the following functions and formulae from analytic number theory for our discussion, for which interested readers can refer to \cite{analytic number theory}.
\begin{definition}
The Jacobi theta functions are defined as follows: 
$$
\left\{
\begin{array}{ll}
\vartheta_2(\tau)&=\Sigma_{m\in\mathbb{Z}}q^{(m+\frac{1}{2})^2},\\
\vartheta_3(\tau)&=\Sigma_{m\in\mathbb{Z}}q^{m^2},\\
\vartheta_4(\tau)&=\Sigma_{m\in\mathbb{Z}}(-q)^{m^2}.\\
\end{array}
\right.
$$
\end{definition}
\begin{definition} The Dedekind eta function is defined by
$$
\eta(\tau)=q^{\frac{1}{12}}\prod^\infty_{m=1}(1-q^{2m}).
$$
\end{definition}

The Jacobi theta functions and the Dedekind eta function are connected as follows \cite{analytic number theory}:
\begin{equation}\label{theta and eta}
\left\{
\begin{array}{ll}
\vartheta_2(\tau)&=\frac{2\eta(2\tau)^2}{\eta(\tau)},\\
\vartheta_3(\tau)&=\frac{\eta(\tau)^5}{\eta(\frac{\tau}{2})^2\eta(2\tau)^2},\\
\vartheta_4(\tau)&=\frac{\eta(\frac{\tau}{2})^2}{\eta(\tau)}.\\
\end{array}
\right.
\end{equation}




Lattice encoding for the wiretap channel (\ref{channel model}) is done via a generic coset coding strategy \cite{ISITA}: let $\Lambda_e\subset\Lambda_b$ be two nested lattices. A $k$-bit message is mapped to a coset in $\Lambda_b/\Lambda_e$, after which a vector is randomly chosen from the coset as the encoded word. The lattice $\Lambda_e$ can be interpreted as introducing confusion for Eve, while $\Lambda_b$ is intended to ensure reliability for Bob. Since a message is now corresponding to a coset of codewords instead of one single codeword, the probability of correct decoding is then summing over the whole coset (suppose that we do not have power constraint and are utilizing the whole lattice to do the encoding). Here we are interested in computing $P_{c,e}$, Eve's probability of correct decision, and want to minimize this probability. It was shown in \cite{ISITA,Gaussian wiretap codes} that to minimize $P_{c,e}$ is to minimize 
\begin{equation}\label{secrecy theta series}
\sum_{\mathbf{t}\in \Lambda_e}e^{-||\mathbf{t}||^2/2\sigma_e^2},
\end{equation}
which is easily recognized as the theta series of $\Lambda_e$ at $\tau=\frac{i}{2\pi\sigma_e^2}$. We hence only care about values of $\tau$ such that $\tau=yi,\ y>0$.

Motivated by the above argument, the confusion brought by the lattice $\Lambda_e$ with respect to no coding (namely, use a scaled version of the lattice $\mathbb{Z}^n$ with the same volume) is measured as follows:
\begin{definition} \cite{ISITA}
Let $\Lambda$ be an $n$-dimensional lattice of volume $v^n$. The secrecy function of $\Lambda$ is given by
$$\Xi_{\Lambda}(\tau)=\frac{\Theta_{v\mathbb{Z}^n}(\tau)}{\Theta_{\Lambda}(\tau)}, \tau=yi, y>0 .$$
The \textit{secrecy gain} is then the maximal value of the secrecy function with respect to $\tau$ and is denoted by $\chi_{\Lambda}$.
\end{definition}

\begin{figure}[htp]

\centering
\includegraphics[width=80mm, height=60mm]{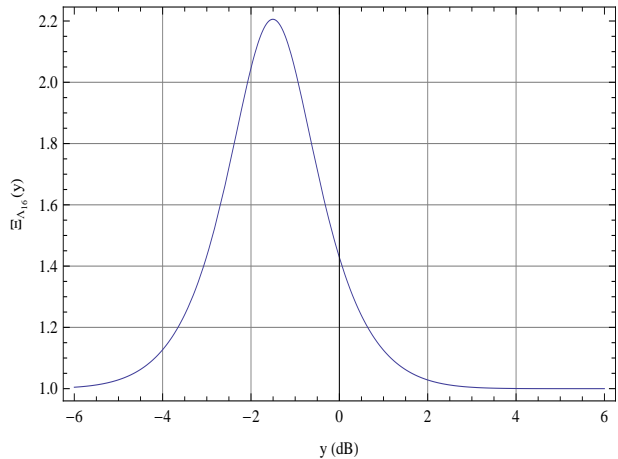}
\caption{\label{fig:secrecy function of BW_16}Secrecy function of $BW_{16}$}
\end{figure}
 
$\ell$-modular lattices were shown to have a symmetry point, called \textit{weak secrecy gain} $\chi^w_{\Lambda}$, at $\tau=\frac{i}{\sqrt{\ell}}$ in their secrecy function \cite{Gaussian wiretap codes}. See  Fig. \ref{fig:secrecy function of BW_16} for an example, where $y$ is plotted in dB to transform the multiplicative symmetry point into an additive symmetry point. $BW_{16}$ is a $2$-modular lattice. One can see there is a symmetry point at $y=-\frac{3}{2}$ dB, which is $\frac{\sqrt{2}}{2}$. This paper is devoted to computing the weak secrecy gain of $2$- and $3$-modular lattices in small dimensions.

\section{The weak secrecy gain of $2$- and $3$-modular lattices in small dimensions}\label{sec:results}

The key to the computation of secrecy gains is the theta series of the corresponding lattice. We present here two approaches to obtain a closed form expression of the theta series of $2$- and $3$-modular lattices: the modular form approach and the weight enumerator approach. The modular form approach relies on the fact that the theta series of an $\ell$-modular lattice belongs to the space of modular forms generated by some basic functions, which gives a decomposition formula. The formula for even $2$- and $3$-modular lattices is comparatively simple while the formula for $\ell$-modular lattices in general, including the odd lattices, is rather complicated.  A weight enumerator approach is added in the computation for odd $2$-modular lattices in the second subsection. This approach exploits the connection between the weight enumerator of a self-dual code and the theta series of a lattice constructed from this code. But calculating the weight enumerator of the code adds considerable workload.


\subsection{Even $2$ and $3$-modular lattices}                                                   %

The theta series of modular lattices are modular forms, which, roughly speaking, are functions that stay ``invariant'' under the transformation by certain subgroups of the group SL$_2(\mathbb{Z})$ \cite{modular forms}. The modular form theory shows that theta series as modular forms are expressed in a polynomial in two basic modular forms. We only need a few terms of a theta series to compute the coefficients of this expression and obtain a closed form expression of the theta series. The following lemma plays a crucial role in our calculation of the theta series of $2$- and $3$-modular lattices.
\begin{lemma}\cite{modular lattices}\label{lemma}
The theta series of an even $\ell$-modular lattice of dimension $n=2k$ when $\ell=1,\ 2,\ 3$ belongs to a space of \textit{modular forms} of weight $k$ generated by the functions $\Theta_{2k_0}^{\lambda}(\tau)\Delta_{2k_1}^{\mu}(\tau)$ with integers $\lambda,\ \mu\geq0$ satisfying $k_0\lambda+k_1\mu=k$, where for $\ell=1,\ 2,\ 3$, $k_0=4,\ 2,\ 1$ respectively, $k_1=\frac{24}{1+\ell}$, $\Theta_{2k_0}(\tau)$ denote the theta series of the modular lattices $E_8,\ D_4\mbox{ and }A_2$, respectively, and $
\Delta_{2k_1}(\tau)=\left(\eta(\tau)\eta(\ell \tau)\right)^{k_1}$.
\end{lemma}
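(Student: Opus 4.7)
The proof naturally divides into two stages: first, establish that $\Theta_\Lambda$ is a modular form of weight $k$ for the Fricke group $\Gamma_0(\ell)^+$; second, identify the resulting space explicitly for $\ell\in\{1,2,3\}$.

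For the first stage, I would derive two transformation laws. Evenness of $\Lambda$ gives $\Theta_\Lambda(\tau+1)=\Theta_\Lambda(\tau)$ immediately, since every norm $\|\lambda\|^2$ is an even integer. Poisson summation yields the classical identity $\Theta_\Lambda(-1/\tau)=\det(\Lambda)^{-1/2}(\tau/i)^{k}\Theta_{\Lambda^*}(\tau)$ for a rank-$2k$ lattice. Combining this with $\det(\Lambda)=\ell^{k}$ from \eqref{determinant} and with $\Theta_{\Lambda^*}(\tau)=\Theta_\Lambda(\tau/\ell)$ (since $\sigma:\Lambda^*\to\Lambda$ rescales squared norms by $\ell$), and then replacing $\tau$ by $\ell\tau$, one obtains the Fricke-type identity
\[
\Theta_\Lambda\bigl(-1/(\ell\tau)\bigr)=\ell^{k/2}(\tau/i)^{k}\Theta_\Lambda(\tau).
\]
Together with the transformation under the congruence subgroup $\Gamma_0(\ell)$, this places $\Theta_\Lambda$ in the space $M_k(\Gamma_0(\ell)^+)$ of weight-$k$ modular forms for the Fricke group.

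For the second stage, I would identify two distinguished elements of $M_*(\Gamma_0(\ell)^+)$. The lattices $E_8$, $D_4$, $A_2$ are themselves even $\ell$-modular for $\ell=1,2,3$ respectively, of ranks $2k_0=8,4,2$; by the first stage their theta series $\Theta_{2k_0}$ are weight-$k_0$ modular forms for $\Gamma_0(\ell)^+$. The eta products $\Delta_{2k_1}=(\eta(\tau)\eta(\ell\tau))^{k_1}$ have weight $k_1=24/(1+\ell)$, and a direct computation from the $q$-expansion of $\eta$ shows that they are cusp forms for $\Gamma_0(\ell)^+$ (vanishing at both cusps for $\ell=2,3$ and at the single cusp for $\ell=1$). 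Algebraic independence of $\Theta_{2k_0}$ and $\Delta_{2k_1}$ is immediate because one is a cusp form and the other is not. A dimension count on the genus-zero modular curve $X_0(\ell)^+$ via the classical valence formula then yields $\dim M_k(\Gamma_0(\ell)^+)=\#\{(\lambda,\mu)\in\mathbb{Z}_{\geq 0}^{2}:k_0\lambda+k_1\mu=k\}$, so the listed monomials span, and being linearly independent they form a basis.

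The main obstacle is the dimension count itself, which requires a case-by-case application of the valence formula for each of the three Fricke groups and is where Quebbemann's original argument does its real work. Everything else is either a routine Poisson summation or a direct verification that $E_8$, $D_4$, $A_2$ are $\ell$-modular in the relevant cases. For the purposes of the present paper one may treat \cite{modular lattices} as a black box and use the resulting decomposition to recover the theta series of any specific even $\ell$-modular lattice from only its first few Fourier coefficients, which is exactly the strategy pursued in what follows.
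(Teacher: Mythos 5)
The paper offers no proof of this lemma: it is quoted verbatim as a known result of Quebbemann \cite{modular lattices}, and the authors explicitly use it as a black box. So there is no in-paper argument to compare against; what you have written is a sketch of the proof in the cited reference, and it follows the standard route (Poisson summation plus the similarity $\sigma$ to get the Fricke-type functional equation, then identification of the graded ring of forms for the relevant genus-zero group). Your functional equation $\Theta_\Lambda(-1/(\ell\tau))=\ell^{k/2}(\tau/i)^k\Theta_\Lambda(\tau)$ is correct, and your linear-independence argument for the monomials (one generator is a cusp form, the other has nonzero constant term, so the monomials with $k_0\lambda+k_1\mu=k$ have distinct orders of vanishing at infinity) is the usual one.

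Two points of imprecision worth flagging. First, for $\ell=3$ the generator $\Theta_{A_2}$ has weight $k_0=1$, so odd weights occur and the space in question is not literally $M_k(\Gamma_0(3)^+)$ in the trivial-character sense; one must work with the appropriate multiplier system (or quadratic character attached to the discriminant), and the valence-formula dimension count has to be carried out in that twisted setting. A similar care is needed at weight $2$ for $\ell=2$. Second, "algebraic independence is immediate because one is a cusp form and the other is not" is too quick as stated -- what you actually need (and what your $q$-expansion argument does deliver) is linear independence of the specific monomials of each fixed weight; algebraic independence of the two generators as such is a slightly stronger statement. Neither gap is fatal, and since the paper itself defers entirely to \cite{modular lattices}, deferring the dimension count there, as you do in your last paragraph, is consistent with how the result is used.
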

%
\begin{example} \label{even unimodular}
If $\ell=1$, we read from Lemma \ref{lemma} that $k_0=4$, $k_1=\frac{24}{2}=12$, $\Theta_{2k_0}(\tau)=\Theta_{E_8}(\tau)$ and $\Delta_{2k_1}(\tau)=\eta^{24}(\tau)$. We then deduce that if $\Lambda$ is an even unimodular lattice of dimension $n=2k$ then
\begin{equation}\label{equ:even unimodular}
\Theta_{\Lambda}(\tau)=\sum_{4\lambda+12\mu=k}a_{\mu}\Theta_{E_8}^{\lambda}(\tau)\Delta_{24}^{\mu}(\tau).
\end{equation}
\end{example}

The formula (\ref{equ:even unimodular}) was adopted in \cite{ITW,Gaussian wiretap codes} to compute the secrecy gains of several even unimodular lattices.

In order to write the secrecy function, we need to have the theta series of $\mathbb{Z}^n$ scaled to the right volume.
Now it follows from (\ref{determinant}) that 
\begin{equation}\label{theta series of vZ^n}
\Theta_{\ell^{\frac{1}{4}}\mathbb{Z}^n}(\tau)=\vartheta_3^n(\sqrt{\ell}\tau).
\end{equation}


According to Lemma \ref{lemma}, the theta series of an even $2$-modular lattice $\Lambda$ of dimension $n=2k$ can be written as 
\begin{equation}\label{2-modular}
\Theta_{\Lambda}(\tau)=\sum_{2\lambda+8\mu=k}a_{\mu}\Theta_{D_4}^{\lambda}(\tau)\Delta_{16}^{\mu}(\tau),
\end{equation}
where
\begin{equation}\label{D_4}
\begin{array}{ll}
\Theta_{D_4}(\tau)&=\frac{1}{2}\left( \vartheta_3^4(\tau)+\vartheta_4^4(\tau) \right)\\
                             &=1+24q^2+24q^4+96q^6+\cdots\\
\end{array}
\end{equation}
and
$$
\Delta_{16}(\tau)=\left(\eta(\tau)\eta(2\tau)\right)^{8}.
$$
By (\ref{theta and eta}), we can write $\Delta_{16}(\tau)$ in terms of Jacobi theta functions and compute the first few terms:
\begin{equation}\label{Delta_16}
\begin{array}{ll}
\Delta_{16}(\tau)&=\frac{1}{256}\vartheta_2^8(\tau)\vartheta_3^4(\tau)\vartheta_4^4(\tau)\\
                          &=q^2-8q^4+12q^6+\cdots.\\
\end{array}
\end{equation} 
The secrecy function of an even $2$-modular lattice $\Lambda$ of dimension $n$ is then written as
$$
\Xi_{\Lambda}(\tau)=\frac{\vartheta_3^n(\sqrt{2}\tau)}{\sum_{2\lambda+8\mu=k}a_{\mu}\Theta_{D_4}^{\lambda}(\tau)\Delta_{16}^{\mu}(\tau)},
$$
or more conveniently,
$$
\begin{array}{ll}
1/\Xi_{\Lambda}(\tau)&=\sum_{2\lambda+8\mu=k}a_{\mu}\frac{\Theta_{D_4}^{\lambda}(\tau)\Delta_{16}^{\mu}(\tau)}{\vartheta_3^n(\sqrt{2}\tau)}\\
                                   &=\sum_{2\lambda+8\mu=k}a_{\mu}\left(\frac{\Theta_{D_4}(\tau)}{\vartheta_3^4(\sqrt{2}\tau)}\right)^{\lambda}\left(\frac{\Delta_{16}(\tau)}{\vartheta_3^{16}(\sqrt{2}\tau)}\right)^{\mu}.
\end{array}
$$
Now we only need to know the coefficients $a_{\mu}$ in order to compute the weak secrecy gain of a $2$-modular lattice.

Let us compute an example to show how the coefficients $a_\mu$'s in (\ref{2-modular}) are computed. By substituting (\ref{D_4}) and (\ref{Delta_16}) into (\ref{2-modular}), we have a formal sum with coefficients represented by the $a_{\mu}$'s. Then by comparing this formal sum with (\ref{equ:theta series}), we obtain a number of linear equations in the $a_{\mu}$'s. When we have enough equations, the $a_{\mu}$'s can be recovered by solving a linear system. 

\begin{example}
$BW_{16}$ is an even lattice with minimum norm $4$. The theta series of $BW_{16}$ looks like
$$\Theta_{BW_{16}}(\tau)=1+0q^2+A_4q^4+\cdots,\ A_4\neq 0.$$
On the other hand, by (\ref{2-modular}), (\ref{D_4}) and (\ref{Delta_16}),
$$
\begin{array}{ll}
\Theta_{BW_{16}}(\tau)&=a_0\Theta_{D_4}^{4}(\tau)+a_1\Delta_{16}(\tau)\\
                            &=a_0(1+24q^2+\cdots)^4+a_1(q^2+\cdots)\\
                            &=a_0(1+96q^2+\cdots)+a_1(q^2+\cdots)\\
                            &=a_0+(96a_0+a_1)q^2+\cdots.\\
\end{array}
$$
We now have two linear equations in two unknowns $a_0$ and $a_1$
$$
\left \{
\begin{array}{cc}
a_0&=1\\
96a_0+a_1&=0\\
\end{array}
\right.
$$
which gives $a_0=1$ and $a_1=-96$,
yielding the theta series
\begin{equation}
\Theta_{BW_{16}}=\Theta_{D_4}^4-96\Delta_{16}.
\end{equation}
\end{example}
The weak secrecy gain of $BW_{16}$ can then be approximated using Mathematica \cite{Mathematica} (see Fig. \ref{fig:secrecy function of BW_16}):
\begin{equation}
\chi_{BW_{16}}=2.20564.
\end{equation}

Similarly according to Lemma \ref{lemma}, the theta series of an even $3$-modular lattice $\Lambda$ of dimension $n=2k$ can be written as 
\begin{equation}\label{3-modular}
\Theta_{\Lambda}(\tau)=\sum_{\lambda+6\mu=k}a_{\mu}\Theta_{A_2}^{\lambda}(\tau)\Delta_{12}^{\mu}(\tau),
\end{equation}
where
\begin{equation}\label{A_2}
\begin{array}{ll}
\Theta_{A_2}(\tau)&=\vartheta_2(2\tau)\vartheta_2(6\tau)+\vartheta_3(2\tau)\vartheta_3(6\tau)\\
                             &=1+6q^2+0q^4+6q^6+\cdots\\
\end{array}
\end{equation}
and
$$
\Delta_{12}(\tau)=\left(\eta(\tau)\eta(3\tau)\right)^{6}.
$$
We can also compute the first few terms of $\Delta_{12}(\tau)$:
\begin{equation}\label{Delta_12}
\Delta_{12}(\tau)=q^2-6q^4+9q^6+\cdots.
\end{equation} 

The secrecy function of an even $3$-modular lattice $\Lambda$ of dimension $n$ is 
$$
\begin{array}{ll}
1/\Xi_{\Lambda}(\tau)&=\sum_{\lambda+6\mu=k}\frac{a_{\mu}\Theta_{A_2}^{\lambda}(\tau)\Delta_{12}^{\mu}(\tau)}{\vartheta_3^n(\sqrt{3}\tau)}.\\
                                   &=\sum_{\lambda+6\mu=k}a_{\mu}\left(\frac{\Theta_{A_2}(\tau)}{\vartheta_3^2(\sqrt{3}\tau)}\right)^{\lambda}\left(\frac{\Delta_{12}(\tau)}{\vartheta_3^{12}(\sqrt{3}\tau)}\right)^{\mu}.
\end{array}
$$

Table \ref{table:2,3-modular} summarizes the weak secrecy gains of even $2$- and $3$-modular lattices computed. The basic information about these lattices, such as minimum norm and kissing number can be found in \cite{catalogue}.
\begin{table}
\centering
\caption{\label{table:2,3-modular} Weak secrecy gains of the known even $2$- and $3$-modular lattices}
\begin{tabular}{|c|c|c|c|c|}
\hline
dim & lattice     &$\ell$           & theta series                                              &$\chi^w_{\Lambda}$ \\
\hline
\hline
$2$  & $A_{2}$  &$3$ &$\Theta_{A_2}$  &$1.01789$\\
\hline
\hline
$4$  & $D_4$   &$2$ &$\Theta_{D_4}$  &$1.08356$\\
\hline
\hline
$12$  & $K_{12}$  &$3$ &$\Theta_{A_2}^6-36\Delta_{12}$  &$1.66839$\\
\hline
\hline
$14$  & $C^2\times G(2,3)$ &$3$ &$\Theta_{A_2}^7-42\Theta_{A_2}\Delta_{12}$ &$1.85262$\\
\hline
\hline
$16$  & $BW_{16}$  &$2$ &$\Theta_{D_4}^4-96\Delta_{16} $&$2.20564$\\
\hline
\hline
$20$  & $HS_{20}$  &$2$ &$\Theta_{D_4}^5-120\Theta_{D_4}\Delta_{16} $&$3.03551$\\
\hline
\hline
$22$  & $A_2\times A_{11}$ &$3$ &$\Theta_{A_2}^{11}-66\Theta_{A_2}^5\Delta_{12}$ &$3.12527$\\
\hline
$24$  & $L_{24.2}$  &$3$ &$\Theta_{A_2}^{12}-72\Theta_{A_2}^{6}\Delta_{12}$    &$3.92969$\\
&&&$-216\Delta_{12}^2$&\\
\hline
\end{tabular}

\end{table}

\subsection{Odd $2$-modular lattices}                                                                %
Odd $2$-modular lattices were constructed in \cite{CBachoc,PSole} via Construction A. They are, by the time of writing this paper, the only known instances of odd $2$-modular lattices. There is a natural connection between the theta series of the lattice constructed from a code $C$ via Construction A and an appropriate weight enumerator of the code $C$. We will exploit this connection to obtain a closed form expression for these lattices.

For the rest of the paper, we will let $K=\mathbb{Q}(\sqrt{-2})$ and $R=\mathfrak{O}_K/3\mathfrak{O}_K$, where the notations are explained in Section \ref{sec:preliminaries}. According to (\ref{eq: ringofintegers}), since $-2\equiv 2\mbox{ mod }4$, the ring of integers $\mathfrak{O}_K$ of $K$ is $\mathfrak{O}_K=\mathbb{Z}[\sqrt{-2}]=\{a+b\sqrt{-2}|a,b\in\mathbb{Z}\}$. Now we consider the decomposition of the prime ideal $3\mathfrak{O}_K$. Since $3=(1+\sqrt{-2})(1-\sqrt{-2})$ and $(1+\sqrt{-2})\mathfrak{O}_K\neq(1-\sqrt{-2})\mathfrak{O}_K$, the ideal $3\mathfrak{O}_K$ splits. According to (\ref{eq: quotientringR}), the quotient ring $R=\mathfrak{O}_K/3\mathfrak{O}_K=\mathbb{F}_3\times \mathbb{F}_3$. Note that the ring $\mathbb{F}_3+v\mathbb{F}_3$ with $v^2=1$ is isomorphic to the ring $\mathbb{F}_3\times \mathbb{F}_3$, through an isomorphism $\delta:\ a(v-1)+b(v+1)\mapsto (a,b)$. We will identify $R=\mathfrak{O}_K/3\mathfrak{O}_K$ with the ring $\mathbb{F}_3+v\mathbb{F}_3$ and use the two notations interchangeably. In particular, we will identify the coset $a+3\mathfrak{O}_K$ with $a\in \mathbb{F}_3$, and the coset $\sqrt{-2}+3\mathfrak{O}_K$ with $v$.  

Let $C$ be a code of length $n=2k$ over $R=\mathbb{F}_3+v\mathbb{F}_3=\mathfrak{O}_K/3\mathfrak{O}_K$, which is by definition a $R$-submodule of $R^n$. According to Construction A, $\rho^{-1}(C)$ is a lattice over $\mathfrak{O}_K$\footnote{A $k$-dimensional lattice can be defined in a more general setting by a free abelian group of rank $k$.}, say, with generator matrix
$$
\left (
\begin{array}{ccc}
\lambda_{11}&\cdots&\lambda_{1k}\\
&\cdots&\\
\lambda_{k1}&\cdots&\lambda_{kk}\\
\end{array}
\right ).
$$
Let $\frac{1}{\sqrt{3}}\rho^{-1}(C)_{real}$ denote the real lattice defined by the generator matrix
$$
\frac{1}{\sqrt{3}}\left (
\begin{array}{ccccc}
\mbox{Re}(\lambda_{11})&\mbox{Im}(\lambda_{11})&\cdots&\mbox{Re}(\lambda_{1k})&\mbox{Im}(\lambda_{1k})\\
\mbox{Im}(\lambda_{11})&\mbox{Re}(\lambda_{11})&\cdots&\mbox{Im}(\lambda_{1k})&\mbox{Re}(\lambda_{1k})\\
&&\cdots&&\\
\mbox{Re}(\lambda_{k1})&\mbox{Im}(\lambda_{k1})&\cdots&\mbox{Re}(\lambda_{kk})&\mbox{Im}(\lambda_{kk})\\
\mbox{Im}(\lambda_{k1})&\mbox{Re}(\lambda_{k1})&\cdots&\mbox{Im}(\lambda_{kk})&\mbox{Re}(\lambda_{kk})\\
\end{array}
\right ).
$$

Now we look at the theta series of the lattice $\frac{1}{\sqrt{3}}\rho^{-1}(C)_{real}$ constructed from a code $C$ over $R$. 

\begin{definition}\cite{PSole}
The \textit{length function} $l_K$ of an element $r$ in $R=\mathbb{F}_3+v\mathbb{F}_3=\mathfrak{O}_K/3\mathfrak{O}_K$ is defined by
\begin{equation}\label{lengthfunction}
l_K(r)=\inf\{x\bar{x}|x\in r\subset \mathfrak{O}_K\},
\end{equation}
where $\bar{x}$ is the complex conjugation of $x$.
\end{definition}

One computes the length of the nine elements of $R$ as follows:
\begin{equation}\label{length}
\left \{
\begin{array}{ll}
l_K(0)&=0\\
l_K(\pm 1)&=1\\
l_K(\pm v)&=2\\
l_K(\pm 1\pm v)&=3.\\
\end{array}
\right.
\end{equation}
\begin{definition}\cite{PSole}
The \textit{length composition} $n_l(\mathbf{x})$, $l=0,1,2,3$ of a vector $\mathbf{x}$ in $R^n$ counts the number of coordinates of length $l$. The \textit{length weight enumerator} of a code $C$ over $R$ is then defined by
\begin{equation}\label{lengthweightenumerator}
\mbox{lwe}_C(a,b,c,d)=\sum_{\mathbf{c}\in C}a^{n_0(\mathbf{c})}b^{n_1(\mathbf{c})}c^{n_2(\mathbf{c})}d^{n_3(\mathbf{c})}.
\end{equation}
\end{definition}

Define four theta series $\theta_l$, $l=0,1,2,3$ corresponding to the four different lengths of elements of $R$:
\begin{equation}\label{thetatolength}
\left \{
\begin{array}{ll}
\theta_0&=\sum_{x\in 3\mathfrak{O}_K}q^{\frac{x\bar{x}}{3}}\\
\theta_1&=\sum_{x\in 1+3\mathfrak{O}_K}q^{\frac{x\bar{x}}{3}}\\
\theta_2&=\sum_{x\in \sqrt{-2}+3\mathfrak{O}_K}q^{\frac{x\bar{x}}{3}}\\
\theta_3&=\sum_{x\in 1+\sqrt{-2}+3\mathfrak{O}_K}q^{\frac{x\bar{x}}{3}}.\\
\end{array}
\right.
\end{equation} 
Recalling that $\mathfrak{O}_K=\{a+b\sqrt{-2}|a,b\in\mathbb{Z}\}$, the theta series are written as double sums.
\begin{equation}\label{infinitesum}
\left \{
\begin{array}{ll}
\theta_0&=\sum_{a\in\mathbb{Z}}\sum_{b\in\mathbb{Z}}q^{3a^2+6b^2}\\
\theta_1&=\sum_{a\in\mathbb{Z}}\sum_{b\in\mathbb{Z}}q^{3(a+\frac{1}{3})^2+6b^2}\\
\theta_2&=\sum_{a\in\mathbb{Z}}\sum_{b\in\mathbb{Z}}q^{3a^2+6(b+\frac{1}{3})^2}\\
\theta_3&=\sum_{a\in\mathbb{Z}}\sum_{b\in\mathbb{Z}}q^{3(a+\frac{1}{3})^2+6(b+\frac{1}{3})^2}.\\
\end{array}
\right.
\end{equation} 

We already know how to handle the $lm^2$ type of infinite sum, namely, 
$$
\sum_{m\in \mathbb{Z}}q^{lm^2}=\sum_{m\in \mathbb{Z}}(q^l)^{m^2}=\vartheta_3(l\tau).
$$
For the $(3m+1)^2$ type of infinite sum, we first observe that, on one hand,
$$
\sum_{m\in\mathbb{Z}}q^{m^2}=\sum_{m\in\mathbb{Z}}q^{(3m)^2}+\sum_{m\in\mathbb{Z}}q^{(3m+1)^2}+\sum_{m\in\mathbb{Z}}q^{(3m-1)^2}
$$
and, on the other hand,
$$
\sum_{m\in\mathbb{Z}}q^{(3m+1)^2}=\sum_{m\in\mathbb{Z}}q^{(3m-1)^2}.
$$
We then conclude that
$$
\begin{array}{ll}
\sum_{m\in\mathbb{Z}}q^{(3m+1)^2}&=\frac{1}{2}\left(  \sum_{m\in\mathbb{Z}}q^{m^2} -   \sum_{m\in\mathbb{Z}}q^{(3m)^2}\right)\\
                                                           &=\frac{1}{2}\left(\vartheta_3(\tau)-\vartheta_3(9\tau)\right).\\
\end{array}
$$
The four theta series defined above are then computed as
\begin{equation}\label{thetatolength}
\left \{
\begin{array}{ll}
\theta_0&=\vartheta_3(3\tau)\vartheta_3(6\tau)\\
\theta_1&=\frac{1}{2}\left(\vartheta_3(\frac{\tau}{3})-\vartheta_3(3\tau)\right)\vartheta_3(6\tau)\\
\theta_2&=\frac{1}{2}\vartheta_3(3\tau)\left(\vartheta_3(\frac{2\tau}{3})-\vartheta_3(6\tau)\right)\\
\theta_3&=\frac{1}{4}\left(\vartheta_3(\frac{\tau}{3})-\vartheta_3(3\tau)\right)\left(\vartheta_3(\frac{2\tau}{3})-\vartheta_3(6\tau)\right).\\
\end{array}
\right.
\end{equation} 

\begin{theorem}
\begin{equation}\label{Macwillim} 
\Theta_{\frac{1}{\sqrt{3}}\rho^{-1}(C)}(q)=\mbox{lwe}_C(\theta_0,\theta_1,\theta_2,\theta_3).
\end{equation}
\end{theorem}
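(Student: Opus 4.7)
My plan is to carry out the standard Construction~A style computation: decompose the lattice as a disjoint union of cosets indexed by codewords, split the theta series as a sum of products over the coordinates, and then identify each coordinate factor with one of the four $\theta_l$'s via a length-class-function argument.

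First I would observe that $\rho^{-1}(C)=\bigsqcup_{\mathbf{c}\in C}\bigl(\tilde{\mathbf{c}}+3\mathfrak{O}_K^n\bigr)$, where $\tilde c_i\in\mathfrak{O}_K$ denotes any lift of $c_i\in R$. Since the real embedding together with the rescaling by $1/\sqrt{3}$ identifies the Euclidean squared norm of a lattice point of $\tfrac{1}{\sqrt{3}}\rho^{-1}(C)_{\mathrm{real}}$ with $\tfrac{1}{3}\sum_{i=1}^{n}\lambda_i\bar\lambda_i$, the theta series factors coordinatewise:
\begin{equation*}
\Theta_{\frac{1}{\sqrt{3}}\rho^{-1}(C)}(\tau)=\sum_{\mathbf{c}\in C}\prod_{i=1}^{n}\phi(c_i),\qquad \phi(r):=\sum_{x\in\tilde r+3\mathfrak{O}_K}q^{x\bar x/3}.
\end{equation*}

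The heart of the proof is to show that $\phi(r)$ depends on $r$ only through the length $l_K(r)$, so that the four values realized by $\phi$ are exactly $\theta_0,\theta_1,\theta_2,\theta_3$. To this end I would exhibit explicit symmetries that permute cosets of fixed length while preserving both $3\mathfrak{O}_K$ and the form $x\bar x$. The involution $x\mapsto -x$ (multiplication by the unit $-1\in\mathfrak{O}_K^{\times}$) identifies the theta sums over $\pm 1+3\mathfrak{O}_K$ and over $\pm\sqrt{-2}+3\mathfrak{O}_K$, while the group $\{1,-1\}\times\{\mathrm{id},\mathrm{conj}\}$ acts transitively on the four length-$3$ cosets $\pm 1\pm\sqrt{-2}+3\mathfrak{O}_K$ (using that complex conjugation stabilizes $3\mathfrak{O}_K$ and preserves $x\bar x$). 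Hence the representative chosen in each $\theta_l$ may be replaced by any other coset of the same length without changing the sum, and comparing with the definitions in (\ref{thetatolength}) and (\ref{length}) gives $\phi(r)=\theta_{l_K(r)}$.

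Substituting back and regrouping the factors of each product by the common value of $l_K(c_i)$ yields
\begin{equation*}
\Theta_{\frac{1}{\sqrt{3}}\rho^{-1}(C)}(\tau)=\sum_{\mathbf{c}\in C}\theta_0^{n_0(\mathbf{c})}\theta_1^{n_1(\mathbf{c})}\theta_2^{n_2(\mathbf{c})}\theta_3^{n_3(\mathbf{c})}=\mathrm{lwe}_C(\theta_0,\theta_1,\theta_2,\theta_3),
\end{equation*}
which is the claim. The only real obstacle is the verification that $\phi$ is a length-class function: once the unit-action and conjugation symmetries are explicitly laid out, every other step is a routine bookkeeping over the coset decomposition of $\rho^{-1}(C)$.
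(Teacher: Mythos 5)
Your proposal is correct and follows essentially the same route as the paper: decompose $\rho^{-1}(C)$ into cosets indexed by codewords, factor the theta series coordinatewise, and identify each factor with $\theta_{l_K(c_i)}$. The paper's proof is just the four-line chain of equalities and silently assumes the step you make explicit (that the coordinate sum $\phi(r)$ depends only on the length class of $r$, via the action of $-1$ and complex conjugation on the cosets), so your version is, if anything, more complete.
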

\begin{proof}The theta series of the lattice $\frac{1}{\sqrt{3}}\rho^{-1}(C)$ is by definition
$$
\begin{array}{ll}
\Theta_{\frac{1}{\sqrt{3}}\rho^{-1}(C)}(\tau)&=\sum_{\mathbf{\lambda}\in\frac{1}{\sqrt{3}}\rho^{-1}(C)}q^{||\mathbf{\lambda}||^2}\\
                                                               &=\sum_{\mathbf{c}\in C}\sum_{\mathbf{x}\in\frac{1}{\sqrt{3}}(\mathbf{c}+3\mathfrak{O}_K^k)}q^{\mathbf{x}\bar{\mathbf{x}}}\\
                                                              &=\sum_{\mathbf{c}\in C}\theta_0^{n_0(\mathbf{c})}\theta_1^{n_1(\mathbf{c})}\theta_2^{n_2(\mathbf{c})}\theta_3^{n_3(\mathbf{c})}\\
                                                              &=\mbox{lwe}_C(\theta_0,\theta_1,\theta_2,\theta_3).\\
\end{array}
$$
\end{proof}

As it was remarked in \cite{CBachoc} (Remark 3.8) and later proved in \cite{PSole}, if $C$ is a self-dual code over $R$ with respect to Hermitian inner product, then $\frac{1}{\sqrt{3}}\rho^{-1}(C)_{real}$ is an odd $2$-modular lattice.
\begin{example}\label{8-dimensional} 
A Hermitian self-dual code $C$ over $R$ of length $4$ was constructed in \cite{PSole}. It is a linear code with a generator matrix
\begin{equation}\label{generatormatrix}
G^H=\left[
\begin{array}{cccc}
1&0&v&-1-v\\
0&1&-1+v&v\\
\end{array}
\right].
\end{equation}
One can generate all the $81$ codewords and compute the length weight enumerator:
$$
\begin{array}{ll}
\mbox{lwe}_C(a,b,c,d)&=a^4+4a^2d^2+16abcd+8ad^3+8b^3d\\
                                   &\ +4b^2c^2+24bcd^2+8c^3d+8d^4.\\
\end{array}
$$
The theta series of the $8$-dimensional odd $2$-modular lattice is then computed by (\ref{Macwillim}) (using a computer software, for example, Mathematica \cite{Mathematica} to output the first few terms).
$$
\begin{array}{l}
\Theta_{\frac{1}{\sqrt{3}}\rho^{-1}(C)}(\tau)\\
                                =\vartheta_3(3\tau)^4\vartheta_3(6\tau)^4\\
                                \ +\frac{1}{4}\vartheta_3(3\tau)^3\left(\vartheta_3(\frac{\tau}{3})-\vartheta_3(3\tau)\right)\left(\vartheta_3(\frac{2\tau}{3})-\vartheta_3(6\tau)\right)^4\\                               
                                \ +\frac{3}{2}\vartheta_3(3\tau)^2\vartheta_3(6\tau)^2\left(\vartheta_3(\frac{\tau}{3})-\vartheta_3(3\tau)\right)^2\left(\vartheta_3(\frac{2\tau}{3})-\vartheta_3(6\tau)\right)^2\\
                                \ +\frac{5}{8}\vartheta_3(3\tau)\vartheta_3(6\tau)\left(\vartheta_3(\frac{\tau}{3})-\vartheta_3(3\tau)\right)^3\left(\vartheta_3(\frac{2\tau}{3})-\vartheta_3(6\tau)\right)^3\\
                                \ +\frac{1}{4}\vartheta_3(6\tau)^3\left(\vartheta_3(\frac{\tau}{3})-\vartheta_3(3\tau)\right)^4\left(\vartheta_3(\frac{2\tau}{3})-\vartheta_3(6\tau)\right)\\

                                \ +\frac{1}{32}\left(\vartheta_3(\frac{\tau}{3})-\vartheta_3(3\tau)\right)^4\left(\vartheta_3(\frac{2\tau}{3})-\vartheta_3(6\tau)\right)^4\\
                                =1+32q^2+128q^3+240q^4+\cdots.\\
\end{array}
$$
\end{example}

This method has the advantage of being self-contained in its deduction. But the computation of the weight enumerator of the code $C$ is tedious and, worse still, as the dimension increases, it may become infeasible. Let us fall back to the first approach adopted in the previous subsection.

First we need a formula similar to Lemma \ref{lemma} which deals with the theta series of odd $2$-modular lattices. There is indeed a formula which deals with the theta series of $\ell$-modular lattice, including the odd lattices, for $\ell=1,2,3,5,6,7,11,14,15,23$ discovered by E. M. Rains and N. J. A. Sloane.

\begin{lemma}\cite{N.J.A Sloane}\label{lemma2}
Define
$$
f_1(\tau)=\Theta_{C^{\ell}}(\tau),
$$
where the lattice $C^{\ell}$ is as defined in (\ref{Z+lZ}).
Let $\eta^{\ell}(\tau)=\Pi_{d|\ell}\eta(d\tau)$ and let $D_{\ell}=24,16,12,8,8,6,4,4,4,2$ corresponding to $\ell=1,2,3,5,6,7,11,14,15,23$. Define
$$
f_2(\tau)=\left\{
\begin{array}{ll}
\left (\frac{\eta^{\ell}(\frac{\tau}{2})\eta^{\ell}(2\tau)}{\eta^{\ell}(\tau)^2} \right)^{\frac{D_{\ell}}{\mbox{dim }C^{\ell}}},&\ell\mbox{ is odd;}\\
\left (\frac{\eta^{(\frac{\ell}{2})}(\frac{\tau}{2})\eta^{(\frac{\ell}{2})}(4\tau)}{\eta^{(\frac{\ell}{2})}(\tau)\eta^{(\frac{\ell}{2})}(2\tau)} \right)^{\frac{D_{\ell}}{\mbox{dim }C^{\ell}}}   ,&\ell\mbox{ is even.}\\
\end{array}
\right.
$$
The theta series of an $\ell$-modular lattice $\Lambda$ of dimension $k\mbox{dim}(C^{(\ell)})$ can be written as
\begin{equation}\label{l-modular decomposition}
\Theta_{\Lambda}(\tau)=f_1(\tau)^k\sum_{i=0}^{\lfloor k\mbox{ ord}_1(f_1)\rfloor}a_if_2(\tau)^i,
\end{equation}
where $\mbox{ord}_1(f_1)$ is the \textit{divisor} of the modular form $f_1(\tau)$, which, in this case, is $\frac{1}{8}\sum_{d|\ell}d$ if $\ell$ is odd and $\frac{1}{6}\sum_{d|\ell}d$ if $\ell$ is even.
\end{lemma}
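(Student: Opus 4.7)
The plan is to exhibit $\Theta_\Lambda$, $f_1$, and $f_2$ as modular forms on the Atkin--Lehner extended group $\Gamma_0(\ell)^+ = \langle \Gamma_0(\ell), W_\ell\rangle$, and then invoke the structure theorem for the graded ring of modular forms on $\Gamma_0(\ell)^+$ in the (classical) case when $X_0(\ell)^+$ has genus zero. The list $\ell\in\{1,2,3,5,6,7,11,14,15,23\}$ in the lemma is precisely Ogg's list of square-free levels with this property, which is why the statement restricts to those $\ell$.

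First I would check that $\Theta_\Lambda$ is a weight-$n/2$ modular form on $\Gamma_0(\ell)^+$. Poisson summation gives the transformation of $\Theta_\Lambda$ under $\tau\mapsto -1/(\ell\tau)$ in terms of $\Theta_{\Lambda^*}$, and because $\sigma(\Lambda^*)=\Lambda$ rescales norms by $\ell$, the Fricke involution $W_\ell$ acts trivially; together with the obvious translation invariance this lands $\Theta_\Lambda$ in $M_{n/2}(\Gamma_0(\ell)^+)$. The same argument applied to the $\ell$-modular lattice $C^{(\ell)}$ itself shows $f_1=\Theta_{C^{(\ell)}}\in M_{\dim C^{(\ell)}/2}(\Gamma_0(\ell)^+)$. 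For $f_2$, I would use the Newman criteria on the exponents of the eta-quotient to verify weight zero and trivial character on $\Gamma_0(\ell)^+$, then read off the $q$-expansion to confirm that $f_2$ is a Hauptmodul of $X_0(\ell)^+$, vanishing simply at the cusp $i\infty$ and having a simple pole at the other cusp.

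Next I would apply the valence formula for $\Gamma_0(\ell)^+$ to $f_1$, which is holomorphic and nonzero on the upper half-plane; its entire zero divisor is therefore supported at the cusp, and the resulting count --- case-by-case through the ten allowed $\ell$ --- gives $\mbox{ord}_1(f_1)=\tfrac{1}{8}\sum_{d\mid \ell}d$ for odd $\ell$ and $\tfrac{1}{6}\sum_{d\mid \ell}d$ for even $\ell$. The same valence formula applied to arbitrary weight-$k\dim(C^{(\ell)})/2$ forms then yields $\dim M_{k\dim C^{(\ell)}/2}(\Gamma_0(\ell)^+)=\lfloor k\,\mbox{ord}_1(f_1)\rfloor+1$. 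Since $X_0(\ell)^+$ is rational with Hauptmodul $f_2$, every element of this space equals $f_1^k\cdot R(f_2)$ for a rational function $R$; the pole orders at the two cusps force $R$ to be a polynomial of degree at most $\lfloor k\,\mbox{ord}_1(f_1)\rfloor$. The forms $f_1^k f_2^i$ are linearly independent (their $q$-expansions start at distinct orders), and matching dimensions gives the decomposition (\ref{l-modular decomposition}).

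The main obstacle will be the valence/dimension bookkeeping on $\Gamma_0(\ell)^+$: the number of cusps, the elliptic fixed points, and the index $[\mbox{PSL}_2(\mathbb{Z}):\Gamma_0(\ell)^+]$ all vary with the factorization of $\ell$, so the computation of $\mbox{ord}_1(f_1)$ and of $\dim M_w(\Gamma_0(\ell)^+)$ must be carried out separately for each of the ten values. A conceptually distinct obstacle is that the restriction to Ogg's list is essential: for any other square-free $\ell$, $X_0(\ell)^+$ has positive genus, no single Hauptmodul exists, and the right-hand side of (\ref{l-modular decomposition}) fails to span the ambient modular forms space, so part of the proof is really a pointer to Ogg's classification.
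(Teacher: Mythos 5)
The paper does not actually prove Lemma \ref{lemma2}: it is imported wholesale from Rains and Sloane \cite{N.J.A Sloane}, so your sketch can only be judged against that source. Your overall strategy --- realize the theta series as modular forms for a Fricke-type group, check the associated curve has genus zero, then combine the valence formula with a Hauptmodul to force the polynomial decomposition --- is indeed the strategy of \cite{N.J.A Sloane}. But as written it has a gap exactly where the paper needs the lemma.

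You place everything in $M_{n/2}(\Gamma_0(\ell)^+)$, calling the translation invariance ``obvious.'' That is only true for \emph{even} lattices. With the convention $q=e^{\pi i\tau}$, the theta series of an odd integral lattice satisfies $\Theta_\Lambda(\tau+2)=\Theta_\Lambda(\tau)$ but not $\Theta_\Lambda(\tau+1)=\Theta_\Lambda(\tau)$ --- already $f_1=\vartheta_3$ for $\ell=1$ has $\vartheta_3(\tau+1)=\vartheta_4(\tau)$ --- so neither $\Theta_\Lambda$ nor $f_1$ lies in $M_{n/2}(\Gamma_0(\ell)^+)$. The correct group is generated by $\tau\mapsto\tau+2$ and the Fricke involution, a conjugate of (an extension of) a level-$4\ell$ congruence group; this is visible in the statement itself, where $f_2$ involves $\eta(\tau/2)$ and $\eta(4\tau)$. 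All of your bookkeeping (index, cusps, valence formula, genus) must be redone for that group, and this is precisely the case the paper invokes Lemma \ref{lemma2} for, namely \emph{odd} $2$-modular lattices; the even case is already handled by Lemma \ref{lemma}. A second, smaller error: the restriction to $\ell\in\{1,2,3,5,6,7,11,14,15,23\}$ is not Ogg's genus-zero list (e.g.\ $X_0(13)^+$ has genus zero, yet $13$ is excluded). It is the list of square-free $\ell$ with $\sum_{d\mid\ell}d$ dividing $24$, which is what makes the exponent $D_\ell/\dim C^{\ell}=24/\sum_{d\mid\ell}d$ of the eta-quotient $f_2$ a positive integer; the genus-zero property of the relevant curve is a consequence for these $\ell$, not the defining condition. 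With the group corrected and the list characterized properly, the remainder of your argument (non-vanishing of $f_1$ on $\mathcal{H}$, divisor supported at the cusps, dimension count, linear independence of the $f_1^{k-?}f_2^{i}$ via their $q$-expansions) does go through along the lines of \cite{N.J.A Sloane}.
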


Let us now take $\ell=2$. Then $C^2=\mathbb{Z}\oplus\sqrt{2}\mathbb{Z}$ hence
\begin{equation}\label{2modularoddf1}
\begin{array}{ll}
f_1(\tau)&=\Theta_{C^2}(\tau)\\
              &=\vartheta_3(\tau)\vartheta_3(2\tau)\\
              &=1+2q+2q^2+4q^3+\cdots.\\
\end{array}
\end{equation}
Next, $\mbox{ord}_1(f_1)$ is computed to be $\frac{1}{2}$. $D_2=16$. Finally since $2$ is even
$$
f_2(\tau)=\left (\frac{\eta(\frac{\tau}{2})\eta(4\tau)}{\eta(\tau)\eta(2\tau)} \right)^{\frac{16}{2}}=\frac{\vartheta_2^2(2\tau)\vartheta_4^2(\tau)}{4\vartheta_3^2(\tau)\vartheta_3^2(2\tau)}.
$$
We observe that the denominator of $f_2(\tau)$ is  $4f_1^2(\tau)$. We then define a function
\begin{equation}\label{2modularoddDelta4}
\begin{array}{ll}
\Delta_4(\tau)&\triangleq f_1^2(\tau)f_2(\tau)\\
                       &=\frac{1}{4}\vartheta_2^2(2\tau)\vartheta_4^2(\tau)\\
                       &=q-4q^2+4q^3+\cdots,\\
\end{array}
\end{equation}
and rewrite (\ref{l-modular decomposition}) in the form of (\ref{2-modular}):
\begin{equation}\label{2-modularodd}
\Theta_{\Lambda}(\tau)=\sum_{i=0}^{\lfloor\frac{k}{2}\rfloor}a_if_1^{k-2i}(\tau)\Delta_4^i(\tau).
\end{equation}

For lattices in small dimensions, the first few terms of the theta series can be computed numerically using computer softwares, for example, Magma \cite{Magma}.
\begin{example}\label{grammatrix}
A generator matrix of the $8$-dimensional odd $2$-modular lattice in Example \ref{8-dimensional} can be computed from the generator matrix (\ref{generatormatrix}) of the code $C$:
$$
M=\frac{1}{\sqrt{3}}\left[
\begin{array}{cccccccc}
1&0&0&0&0&\sqrt{2}&-1&-\sqrt{2}\\
0&\sqrt{2}&0&0&1&0&-1&-\sqrt{2}\\
0&0&1&0&-1&\sqrt{2}&0&\sqrt{2}\\
0&0&0&\sqrt{2}&1&-\sqrt{2}&1&0\\
0&0&0&0&3&0&0&0\\
0&0&0&0&0&3\sqrt{2}&0&0\\
0&0&0&0&0&0&3&0\\
0&0&0&0&0&0&0&3\sqrt{2}\\
\end{array}
\right].
$$
To make the typing easy, we compute the Gram matrix 
$$
MM^{T}=\left[
\begin{array}{cccccccc}
2&1&0&-1&0&2&-1&-2\\
1&2&-1&0&1&0&-1&-2\\
0&-1&2&-1&-1&2&0&2\\
-1&0&-1&2&1&-2&1&0\\
0&1&-1&1&3&0&0&0\\
2&0&2&-2&0&6&0&0\\
-1&-1&0&1&0&0&3&0\\
-2&-2&2&0&0&0&0&6\\
\end{array}
\right]
$$ 
and input it to Magma to generate the lattice $\Lambda$. The first few terms of $\Theta_{\frac{1}{\sqrt{3}}\rho^{-1}(C)}(\tau)$ can be obtained (by the command ThetaSeries($\Lambda$,0,4);):
$$
\Theta_{\frac{1}{\sqrt{3}}\rho^{-1}(C)}(q)=1+32q^2+128q^3+240q^4+\cdots.
$$
Now in dimension $8$, the theta series of a $2$-modular lattice can be written as 
\begin{equation}\label{2-modular8-dimensional}
\begin{array}{l}
a_0f_1(\tau)^4+a_1f_1(\tau)^2\Delta_4(\tau)+a_2\Delta_4(\tau)^2\\
=a_0(1+8q+32q^2+\cdots)+a_1(q+0q^2+\cdots)\\
\ +a_2(q^2+\cdots)\\
=a_0+(8a_0+a_1)q+(32a_0+0+a_2)q^2+\cdots.\\
\end{array}
\end{equation}
We then have three linear equations in three unknowns $a_0$, $a_1$ and $a_2$
$$
\left \{
\begin{array}{cc}
a_0&=1\\
8a_0+a_1&=0\\
32a_0+a_2&=32,\\
\end{array}
\right.
$$
which gives $a_0=1$, $a_1=-8$ and $a_2=0$,
yielding the theta series
\begin{equation}
\Theta_{\frac{1}{\sqrt{3}}\rho^{-1}(C)}(q)=f_1(\tau)^4-8f_1(\tau)^2\Delta_4(\tau).
\end{equation}
\end{example}

Theta series of the twelve odd $2$-modular lattices constructed in \cite{PSole} are computed and shown in Table \ref{table:2-modular odd}, as polynomials in $f_1$ and $\Delta_4$ for simplicity. Their weak secrecy gains are approximated using Mathematica \cite{Mathematica}. 
\begin{table}
\centering
\caption{\label{table:2-modular odd} Weak secrecy gains of odd $2$-modular lattices constructed from self-dual codes}
\begin{tabular}{|c|c|c|}
\hline
dim        & theta series                                              &$\chi^w_{\Lambda}$ \\
\hline
\hline
$8$        &$f_1^4-8f_1^2\Delta_4$  &$1.22672$\\
\hline
\hline
$12$       &$f_1^6-12f_1^4\Delta_4$  &$1.49049$\\
\hline
\hline
$16$       &$f_1^8-16f_1^6\Delta_4$  &$2.06968$\\
\hline
\hline
$18$       &$f_1^9-18f_1^7\Delta_4+18f_1^5\Delta_4^2$ &$2.35656$\\
\hline
\hline
$20$       &$f_1^{10}-20f_1^8\Delta_4+40f_1^6\Delta_4^2$&$2.70165$\\
\hline
\hline
$22$       &$f_1^{11}-22f_1^9\Delta_4+66f_1^7\Delta_4^2-4f_1^5\Delta_4^3$&$3.11161$\\
\hline
\hline
$24$       &$f_1^{12}-24f_1^{10}\Delta_4+96f_1^8\Delta_4^2-28f_1^6\Delta_4^3$ &$3.60867$\\
\hline
\hline
$26$       &$f_1^{13}-26f_1^{11}\Delta_4+130f_1^9\Delta_4^2+-80f_1^7\Delta_4^3$ &$4.21349$\\
\hline
\hline
$28$   &$f_1^{14}-28f_1^{12}\Delta_4+168f_1^{10}\Delta_4^2$ &$4.98013$\\
&$-176f_1^8\Delta_4^3+32f_1^6\Delta_4^4$&\\
\hline
\hline
$30$       &$f_1^{15}-30f_1^{13}\Delta_4+210f_1^{11}\Delta_4^2$ &$5.72703$\\
&$-282f_1^9\Delta_4^3+112f_1^7\Delta_4^4$&\\
\hline
\end{tabular}

\end{table}
\section{Best known lattices}       \label{sec:comparision}                                                
Now that we have computed the weak secrecy gains of several $2$- and $3$-modular lattices, we want to compare them with the best unimodular lattices in their respective dimensions. Figure \ref{fig: 2n3vs1} compares the secrecy gains of the best unimodular lattices with the weak secrecy gains of the $2$- and $3$-modular lattices we have computed. We can see that most of these even $2$- and $3$-modular lattices, indicated by disconnected big dots, outperform the unimodular lattices except in dimension $22$, and three of the odd $2$-modular lattices, indicated by disconnected small dots, outperform the unimodular lattices, in particular, in dimension $18$, the odd $2$-modular lattice has the best secrecy gain known by now. 

\begin{figure}[htp]

\centering
\includegraphics[width=80mm, height=60mm]{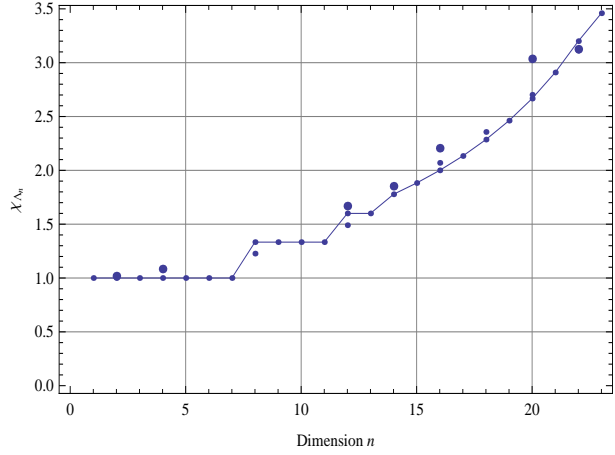}
\caption{\label{fig: 2n3vs1}The weak secrecy gain of $2$- and $3$-modular lattices vs. unimodular lattices as a function of the dimension $n$}
\end{figure}

Table \ref{table:1,2,3-modular} gives a list of $2$- and $3$-modular lattices out-performing the best unimodular lattices.

\begin{table}
\centering
\caption{\label{table:1,2,3-modular}List of $2$- and $3$-modular lattices out-performing the best unimodular lattices}
\begin{tabular}{|c|c|c|c|}
\hline
dim & lattice                          & $\ell$                                              &$\chi_{\Lambda}$ \\
\hline
\hline
$2$  & $\mathbb{Z}^2$         &$1$  &$1$\\
\hline
$2$  & $A_{2}$                       &$3$  &$\geq1.01789$\\
\hline
\hline
$4$  & $\mathbb{Z}^4$         &$1$  &$1$\\
\hline
$4$  & $D_4$                         &$2$  &$\geq1.08356$\\
\hline
\hline
$12$  & $D_{12}^+$              &$1$  &$1.6$\\
\hline
$12$  & $K_{12}$                   &$3$  &$\geq1.66839$\\
\hline
\hline
$14$  & $(E_7^2)^+$             &$1$ &$1.77778$\\
\hline
$14$  & $C_2\times G(2,3)$  &$3$ &$\geq1.85262$\\
\hline
\hline
$16$  & $(D_{8}^2)^+$            &$1$&$2$\\
\hline
$16$  & $\frac{1}{\sqrt{3}}\rho^{-1}(C)_{real}$                &$2$&$\geq 2.06968$\\
\hline
$16$  & $BW_{16}$                &$2$&$\geq2.20564$\\
\hline
\hline
$18$  & $(D_{6}^3)^+$ or $(A_9^2)^+$          &$1$&$2.28571$\\
\hline
$18$  & $\frac{1}{\sqrt{3}}\rho^{-1}(C)_{real}$                &$2$&$\geq2.35656$\\
\hline
\hline
$20$  & $(A_{5}^4)^+$          &$1$&$2.66667$\\
\hline
$20$  & $\frac{1}{\sqrt{3}}\rho^{-1}(C)_{real}$                &$2$&$\geq 2.70165$\\
\hline
$20$  & $HS_{20}$                &$2$&$\geq3.03551$\\
\hline
\hline
$22$  & $(A_1^{22})^+$        &$1$ &$3.2$\\
\hline
$22$  & $A_2\times A_{11}$ &$3$ &$\geq3.12527$\\
\hline
\end{tabular}

\end{table}

\section{Conclusion and future work}\label{sec:conclu}

This paper computes the weak secrecy gains of several known $2$- and $3$-modular lattices in small dimensions. Most of the even $2$- and $3$-modular lattices and three of the odd $2$-modular lattices have a higher secrecy gain than the best unimodular lattices. We then conclude that, at least in dimensions up to $23$, $2$- and $3$-modular lattices are better option for Gaussian wiretap channel.

A line of future work would naturally be investigating $\ell$-modular lattices for other values of $\ell$ to understand if bigger $\ell$ allows better modular lattices in terms of secrecy gain. Also, more $2$- and $3$-modular lattice examples should be found to get a better understanding of why they have a higher secrecy gain, since a classification of such lattices is currently unavailable even in small dimensions.

\section*{Acknowledgment}
The research of F. Lin and of F. Oggier for this work is supported by the Singapore National Research
Foundation under the Research Grant NRF-RF2009-07. The research of P. Sol\'e for this work is supported by Merlion project 1.02.10.

The authors would like to thank Christine Bachoc for helpful discussions.

\end{document}